\documentclass[12, reqno]{amsart}
\usepackage{amsmath}
\usepackage{amscd,amsthm,amsfonts,amsopn,amssymb,verbatim}
\parskip =\medskipamount
\numberwithin{equation}{section}
\newtheorem{theorem}{Theorem} 

\newtheorem{lemma}[theorem]{Lemma}

\theoremstyle{remark}
\DeclareMathOperator{\supp}{supp\,}

\def\be{\begin{equation}}
\def\ee{\end{equation}}

\def\ve{\varepsilon}


\begin{document}
\large

\title{A Quantitative Oppenheim Theorem for Generic Diagonal Quadratic Forms}
\date{\today}
\author{J.~Bourgain}
\address{J.~Bourgain, Institute for Advanced Study, Princeton, NJ 08540}
\email{bourgain@math.ias.edu}
\thanks{The author was partially supported by NSF grants DMS-1301619}
\begin{abstract}
We establish a quantitative version of Oppenheim's conjecture for one-parameter families of ternary indefinite quadratic forms using an analytic number theory approach.
The statements come with power gains and in some cases are essentially optimal.
\end{abstract}
\maketitle

\section
{Introduction}

Let $Q$ be a real nondegenerate indefinite quadratic form in $n\geq 3$ variables which is not a multiple of a form with rational coefficients.
Oppenheim's conjecture states that the set of values of $Q$ on integer vectors is a dense subset of the real line.
The conjecture was proven by Margulis [M] using methods from ergodic theory.
Thus there are functions $A(N)\to \infty$ and $\delta(N)\to 0$ with $N\to\infty$ depending on $Q$, such that
$$
\max_{|\xi|<A(N)} \ \min_{x\in\mathbb Z^n, 0<|x|<N} \ |Q(x)-\xi|< \delta (N).\eqno{(1.1)}
$$
Taking $n=3$, a quantitative version of (1.1) appears in \cite{L-M}, with $A(N)$  and $\delta(N)$ depending logarithmically on $N$.
In this Note, we consider diagonal forms of signature (2, 1)
$$
Q(x) =x_1^2 +\alpha_2x_2^2 -\alpha_3x_2^2 \qquad (\alpha_2, \alpha_3>0)\eqno{(1.2)}
$$
and prove the following for one parameter families.

\noindent
{\bf Theorem.}
{\sl Consider (1.2) with $\alpha_2>0$ fixed and taking say $\alpha_3\in [\frac 12, 1]$.
Then, for almost all $\alpha_3$, the following holds
\begin{itemize}
\item [(i)]
Assuming the Lindel\"of hypothesis for the Riemann zeta function
$$
\min_{\substack {x\in\mathbb Z^3\backslash \{0\}\\ |x|<N}} |Q(x)|\ll N^{-1+\ve} \ \text { for all } \ \ve>0.\eqno{(1.3)}
$$
Moreover (1.1) holds provided
$$
A(N)\delta (N)^{-2} \ll N^{1-\ve}\eqno{(1.4)}
$$
\item [(ii)] Unconditionally, we have
$$
\min_{\substack{x\in\mathbb Z^3\backslash \{0\}\\ |x|< N}} |Q(x)|\ll N^{-\frac 25+\ve}\eqno{(1.5)}
$$
and (1.1), assuming
$$
A(N)^3 \delta(N)^{-\frac {11}2} \ll N^{1-\ve}.\eqno{(1.6)}
$$
\end{itemize}}

Clearly, (1.3) is essentially an optimal statement.

Results on the distribution of generic quadratic forms of signature (2,~1) and (2,  2) were obtained in \cite{E-M-M} but they are not quantitative.
In [S], an analytic and quantitative approach to the pair correlation problem for generic binary quadratic forms $\alpha m^2+mn+\beta n^2$
(which amounts to the distribution of quadratic forms of (2, 2) signature) is given.
The same problem for generic diagonal forms  $m^2+an^2, \alpha>0$ is considered in \cite{B-B-R-R}, again using analytical techniques, though
different from those in [S].
The proof of the above Theorem is based on the same method (see \S8 of \cite{B-B-R-R}).
We note that this technique also enables to obtain distributional results in the sense of \cite{E-M-M} or [S], cf \cite{Bo}.

Returning to quantitative versions of the Oppenheim conjecture, there is also the recent preprint of A.~Ghosh and D.~Kelmer \cite{G-K} to be mentioned, where
the authors establish in particular (1.3) for generic members in the family of {\it all} indefinite ternary quadratic forms, which is 5-dimensional, while in our 
Theorem below a one-dimensional family is considered.
See also \S 5 of this paper.

Next, note that the Theorem is an easy consequence of the following statement.

\noindent
{\bf Proposition.}
{\sl Let $Q= Q_{\alpha_2, \alpha_3}$ be as above, $\alpha_2>0$ fixed.
Let $\xi \in\mathbb R$, \break $|\xi|< \frac 12 N^{2}$, where we have fixed $N$ sufficiently large.
\begin{itemize}
\item[(i)] Assuming Lindel\"of and taking $N^{-1+\ve}<\delta<1$, the statement
$$
\min_{\substack{ x\in\mathbb Z^3\\ 0<|x|<N}} |Q(x)-\xi|<\delta\eqno{(1.7)}
$$
holds, excluding an exceptional set in $\alpha_3\in [\frac 12, 1]$ of measure at most
$$
(\delta N^{1-\ve})^{-1}.\eqno{(1.8)}
$$
\item[(ii)] Unconditionally, the same holds with an exceptional set of measure at most
$$
\delta^{-\frac 56} N^{-\frac 13+\ve}\eqno{(1.9)}
$$
assuming $\delta>N^{-\frac 25}$.
\end{itemize}}

In order to deduce the Theorem from the Proposition, we just let $\xi$ range in a $\delta$-dense subset of $[-A, A]$.

\section
{Proof of the proposition $(i)$}

The argument is a modification of \S8 in \cite{B-B-R-R}.

Let $0\leq w_1 \leq 1, 0\leq w_2\leq 1$ be smooth bumpfunctions satisfying $w_1 =1$ on $[\frac 12, \frac 34]$, $\supp w_1 \subset [\frac 14, 1]$ and
$w_2 =1$ on $[-1, 1], \supp w_2\subset [-2, 2]$, $w_2(t)= w_2(-t)$.

We seek for a lower bound for 
$$
\sum_{x_1, x_2, x_3\in\mathbb Z} w_1\Big(\frac {x_1}N\Big) w_1\Big(\frac {x_2}N\Big) w_1\Big(\frac {x_3}N\Big) \ 1_{[|Q(x) -\xi|<\delta]}\eqno{(2.1)}
$$
or equivalently
$$
\sum_{x_1, x_2, x_3\in\mathbb Z} w_1\Big(\frac {x_1}N\Big) w_1\Big(\frac{x_2}N\Big) w_1\Big(\frac{x_3}N\Big) \ 1_{[|\log(x_1^2+\alpha_2x_2^2-\xi)-2
\log x_3 -\log \alpha_3|<\frac\delta{N^2}]}.\eqno{(2.2)}
$$
Set $T=\frac {N^2}\delta$.
Expressing (2.2) using the Fourier transform, denote
$$
\begin{aligned}
F_1(t)&= \sum_{x_1x_2\in\mathbb Z} w_1\Big(\frac {x_1}N\Big) w_1\Big(\frac {x_2}N\Big) e^{it\log (x_1^2 +\alpha_2 x_2^2-\xi)}\\
F_2(t)&= \sum_{n\in\mathbb Z} w_1\Big(\frac nN\Big) e^{it\log n}
\end{aligned}
$$
Then (2.2) amounts to
$$
\frac 1T \int_{\mathbb R} \widehat{w_2} \Big(\frac tT\Big) F_1(t) \overline{F_2(2t)} e^{-it\log\alpha_3} dt.\eqno{(2.3)}
$$
Split $\widehat{w_2} (\frac tT)$ as $\widehat{w_2}(\frac t{N^{\frac 12}}) + \big(\widehat{w_2}(\frac tT)-\widehat{w_2}(\frac t{N^{\frac 12}})\big)$
and let ($*$) and ($**$) be  the corresponding contributions to (2.3).
Clearly ($*$) amounts to
$$
\frac {N^{\frac 1{2}}} T \ \sum_{x_1, x_2, x_3 \in\mathbb Z} w_1 \Big(\frac {x_1} N\Big) w_1\Big(\frac {x_2} N\Big) w_1\Big(\frac {x_3}N\Big)\
1_{[|\log (x_1^2 +\alpha_2 x_2^2-\xi) -  2\log x_3 -\log \alpha_3|< N^{-\frac 1{2}}]}
$$
which is of the order of $\frac {N^3}T$ without further restrictions on $\alpha_3$.

Indeed, the above expression counts the number of solutions of the diophantine inequality
$$
\frac {\alpha_3x_3^2}{x_1^2+\alpha_2x_2^2-\xi} = 1+ O(N^{-\frac 12}), x_i\approx N
$$
or
$$
x_1^2+\alpha_2x_2^2 -\alpha_3x_3^2=O(N^{3/2}), x_i\approx N
$$
which has $\approx N^{5/2}$ solutions.

Hence, considering ($**$) as a function of $\alpha_3$, we need to evaluate
$$
\text{\rm mes\,} \Big[\alpha_3\in \Big[\frac 12, 1\Big] ; |(**)| \gtrsim \delta N\Big]
$$
which, by Chebyshev's inequality is bounded by $(\delta N)^{-2} \Vert (**)\Vert^2_{{L^2}_{(\alpha_3)}}$.
Since $w_2$ was assumed symmetric, $|\widehat{w_2} (\frac tT) -\widehat{w_2} (\frac t{N^{1/2}})| \leq C\min(1, \frac {t^2}N, (\frac T{|t|})^{10})$.
Hence, using Parseval
$$
\begin{aligned}
\Vert(**)\Vert^2_{L^2_{(\alpha_3)}} &\leq CT^{-2} \int\min \Big(1, \frac {t^4}{N^2}, \Big(\frac Tt\Big)^{20}\Big) |F_1(t)|^2 |F_2(t)|^2 dt\\
& < CT^{-2} N^{6-\frac 32}+CT^{-2}\int_{[|t|>N^{\frac 1{10}}]}\min\Big(1, \Big(\frac Tt\Big)^{20}\Big) |F_1|^2 |F_2|^2
\end{aligned}
$$
and
$$
(\delta N)^{-2}\Vert(**)\Vert^2_{L^2_{(\alpha_3)}}< CN^{-\frac 32} +CN^{-6} \int_{[|t|<N^{\frac 1{10}]}}\min \Big(1, \Big(\frac Tt\Big)^{20}\Big) |F_1|^2 |F_2|^2.
\eqno{(2.4)}
$$
The second term on the r.h.s. of (2.4) is further estimated by
$$
CN^{-6} \max_{|t|>N^{\frac 1{10}}} \Big(\min\Big(1, \frac T{|t|}\Big) |F_2(t)|\Big)^2. \Big[\int \min \big(1, \Big(\frac T{|t|}\Big)^{10}\Big) |F_1(t)|^2 dt\Big].
\eqno{(2.5)}
$$

From the definition of $F_1$, the last factor in (2.5) may clearly be estimated by
$$
\begin{aligned}
&T.\sum_{x_1,x_2,x_3, x_4\in\mathbb Z} w_1\Big(\frac {x_1}N\Big) w_1\Big(\frac {x_2}N\Big) w_1\Big(\frac {x_3}N\big) w_1\Big(\frac {x_4}N\Big) \,
1_{[|\log (x_1^2+\alpha_2 x_2^2-\xi)-\log (x_3^2+\alpha_2 x_4^2-\xi)|<
 \frac 1T]} \\
&\sim T\sum w_1\Big(\frac {x_1}N\Big) w_1\Big(\frac {x_2}N\Big) w_1\Big(\frac {x_3}N\Big) w_1\Big(\frac {x_4}N\Big) \
1_{[|(x_1^2-x_3^2)+\alpha_2(x_2^2 -x_4^2)|<\delta]}\\
&\ll TN^\ve \sum_{\substack{u, v\in\mathbb Z\\ |u|, |v| < N^2}} 1_{[|u+\alpha_2v|<\delta]} \ll TN^{2+\ve}=\frac 1\delta N^{4+\ve}
\end{aligned} 
$$
when the factor $N^\ve$ accounts for the multiplicity in the representations \break $u=x_1^2 -x_3^2, v=x_2^2-x_4^2$.

Next, we need to estimate $F_2(t)$. Denoting
$$
\check w_1(s) =\int^\infty_0 w_1(x) x^s \frac {dx}s
$$
the Mellin transform of $w_1$, we have
$$
F_2(t) =\int_{\text{Res\,} =2} \check w_1(s)N^s \zeta (s-it)\frac{ds}{2\pi i}
$$
where $\check w_1$ has rapid decay on vertical lines.
Shifting the line of integration to $\text {Res\,} =\frac 12$, we pick up the pole of $\zeta$ contributing to
$$
\check w_1 (1+it) N^{1+it}
$$
which for $|t|>N^{\frac 1{10}}$ is negligible due to the decay of $\check w_1$.

Hence $F_2(t)$ may be bounded by
$$
N^{\frac 12} \int^\infty_{-\infty} \frac {|\zeta(\frac 12+i(y-t))|}{1+|y|^{10}} dy \eqno{(2.6)}
$$
and which, assuming the Lindel\"of hypothesis is $\ll N^{\frac 12 }(1+|t|)^\ve$.

From the preceding, (2.5) $\ll \frac 1{\delta N^{1-\ve}}$ upon Lindel\"of, proving (1.8).

\noindent
{\bf Remark.}

Instead of using the Lindel\"of hypothesis, the bound $|\zeta (\frac 12+it)|< C(1+|t|)^{\frac 16}$ implies that $(2.5) <CN^{\frac 12}|t|^{\frac 16}$ and $(2.4)\ll
\delta^{-\frac 43}N^{-\frac 13+\ve}$.
Hence, assuming $\delta>N^{-\frac 14+\ve}$, there is an unconditional bound $\delta^{-\frac 43}N^{-\frac 13+\ve}$ on the measure of the exceptional set.
Better results will be obtained by invoking certain large values estimates on Dirichlet polynomials.

\section
{Large Values Estimates}

The following distributional inequality follows from [Ju] and we  will include a selfcontained argument here.

\begin{lemma}
Consider a Dirichlet polynomial
$$
S(t) =\sum_{n\sim N} a_n n^{it} \ \text { with } \ |a_n|\leq 1.\eqno{(3.1)}
$$
Then, for $T>N$
$$
\text{\rm mes\,} [|t|<T; |S(t)| >V]\ll N^\ve (N^2V^{-2}+N^4V^{-6} T).\eqno{(3.2)}
$$
\end{lemma}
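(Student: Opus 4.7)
The plan is to reduce the measure estimate to counting well-separated points and then apply the Halász--Montgomery inequality. Let $m := \text{\rm mes\,}\{|t|<T : |S(t)|>V\}$ and extract from the super-level set $\{|S|>V\}$ a maximal $1$-separated subset $\{t_1,\ldots,t_R\}$. Since every point of the super-level set lies within distance $1$ of some $t_r$, one has $m \ll R$, so it suffices to bound $R$ by the right-hand side of the claim.

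I would invoke the Halász--Montgomery (Hilbert-type) inequality with vectors $\phi_r = (n^{it_r})_{n \sim N}$ and coefficient vector $a = (a_n)$. Since $S(t_r) = \langle a, \phi_r\rangle$,
\begin{equation*}
RV^2 \le \sum_r |S(t_r)|^2 \le \|a\|_2^2 \cdot \max_r \sum_s |\langle \phi_r, \phi_s\rangle|.
\end{equation*}
The off-diagonal inner products are $\langle \phi_r, \phi_s\rangle = \sum_{n\sim N} n^{i(t_r-t_s)}$, controlled by the first-derivative estimate $|\sum_{n\sim N} n^{iu}| \ll N/(1+|u|)$ obtained by comparison with $\int_N^{2N} x^{iu}\,dx$ (of size $O(N/(1+|u|))$) and one step of Euler--Maclaurin summation. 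Combined with the $1$-separation of the $t_r$, this yields $\max_r \sum_s |\langle \phi_r, \phi_s\rangle| \ll N\log R$, hence $RV^2 \ll N^2\log R$ and $R \ll N^{2+\ve}V^{-2}$. Since $N^{2+\ve}V^{-2} \le N^\ve (N^2V^{-2} + N^4 T V^{-6})$ trivially, this already implies the stated bound.

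Alternatively, the $T$-dependent term arises naturally from a sixth-moment estimate: writing $S^3 = \sum_{m\sim N^3} c_m m^{it}$ with $|c_m|\le d_3(m) \ll N^\ve$ and $\sum_m |c_m|^2 \ll N^{3+\ve}$, Montgomery--Vaughan's mean-value theorem gives
\begin{equation*}
\int_{-T}^T |S|^6\,dt = \int_{-T}^T |S^3|^2\,dt \ll N^{3+\ve}(T+N^3),
\end{equation*}
whence $m \ll N^\ve(N^3 T V^{-6} + N^6 V^{-6})$ by Chebyshev. The main technical point is the uniform first-derivative estimate for $\sum_{n\sim N} n^{iu}$, which requires careful treatment of the boundary terms in Euler--Maclaurin summation (especially for large $|u|$); the combinatorial bookkeeping on $R$ afterwards, and the identification $|S|^6=|S^3|^2$ which feeds the Dirichlet polynomial $S^3$ into the standard mean-value theorem, are entirely routine.
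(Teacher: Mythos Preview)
Your main argument contains a genuine gap: the claimed bound $\bigl|\sum_{n\sim N} n^{iu}\bigr|\ll N/(1+|u|)$ is false once $|u|\gtrsim N$. One step of Euler--Maclaurin only gives
\[
\sum_{n\sim N} n^{iu}=\int_N^{2N} x^{iu}\,dx+O(1)+\int_N^{2N} B_1(\{x\})\,iu\,x^{iu-1}\,dx,
\]
and the last integral admits no cancellation beyond the trivial bound $O(|u|)$; higher Euler--Maclaurin terms involve derivatives of size $(|u|/N)^k$ and help even less. In fact for $N\lesssim |u|$ the sum is genuinely of order $|u|^{1/2}$ by van der Corput (Poisson summation plus stationary phase), which is precisely the bound $|H_N(t)|\ll N/|t|+\sqrt{|t|}$ used in the paper. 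With this correct estimate the off-diagonal contribution in Hal\'asz--Montgomery picks up an extra $R\,N\sqrt{T}$, and one only gets $RV^2\ll N^{2+\ve}+RN\sqrt{T}$, which says nothing when $V^2\lesssim N\sqrt{T}$. Your conclusion $R\ll N^{2+\ve}V^{-2}$ uniformly in $T$ is essentially Montgomery's large-values conjecture, not a theorem.

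The paper's proof repairs exactly this point by Huxley's subdivision: after disposing of $V\le N^{3/4}$ via the mean square, one breaks $[-T,T]$ into intervals of length $T_0\approx V^4/N^2>N$, so that $V^2\gtrsim N\sqrt{T_0}$ and Hal\'asz--Montgomery on each interval gives at most $N^{2+\ve}V^{-2}$ points; multiplying by the number $\asymp 1+T/T_0=1+TN^2V^{-4}$ of intervals produces the two terms of (3.2). Your sixth-moment alternative is correct as far as it goes but does not prove the lemma either: it yields $m\ll N^\ve(N^3T+N^6)V^{-6}$, and for $N<T<N^2$ the term $N^{6}V^{-6}$ dominates both $N^2V^{-2}$ and $N^4TV^{-6}$ (e.g.\ take $T=N^{3/2}$, $V=N^{9/10}$, where the target is $N^{0.2}$ but the sixth moment only gives $N^{0.6}$).
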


\begin{proof}

Note first that since $\int_{|t|<T} |S(t)|^2 dt\ll N^\ve(N+T)(\sum|a_n|^2)\ll N^{1+\ve} T$, the l.h.s. of (3.2) is certainly bounded by $N^{1+\ve}TV^{-2}<
N^{4+\ve} V^{-6}T$ for $V< N^{\frac 34+\ve}$.

Hence, we may assume $V>N^{\frac 34+\ve}$.

Invoking (1.4) of the Main Theorem in [Ju], taking $G=N$, one gets for $V<N$
$$
R\ll_{\ve, k} T^\ve[N^2V^{-2} +TN^{4-\frac 2k} V^{-6+\frac 2k}+T(N^6 V^{-8})^k]\eqno{(3.3)}
$$
for any fixed positive integer $k$ and where $R$ denotes the maximal size of a 1-separated subset $\{t_r; 1\leq r\leq R\}$ of $[|t|<T; |S(t)|>V]$.

Since $V>N^{\frac 34+\ve'}$, (3.2) follows by letting $k\to\infty$.

A more direct proof is obtained as follows.

The Hal\'asz-Montgomery inequality implies that
$$
R^2V^2 \leq RN^2 +N\sum_{r\not= s} |H_N(t_r-t_s)|\eqno{(3.4)}
$$
where we take $H_N(t)=\sum_{n\sim N} n^{it}$. 
Using stationary phase, we have
$$
|H_N(t)|<c\Big(\frac N{|t|}+\sqrt t\Big)\eqno{(3.5)}
$$
so that, since the points $t_r$ are 1-separated, the last term of (3.4) may be bounded by $T^\ve N^2|R|+ c|R|^2N\sqrt T$.
Next, we break up the interval $[|t|<T]$ in intervals $I$ of size $T_0<T$, assuming that $V^2\lesssim N\sqrt T$, taking $T_0 \approx \frac {V^4}{N^2}> N$ (Huxley's
subdivision).
Since then
$$
|\{ r; t_r\in I\}|\ll N^{2+\ve} V^{-2}
$$
from the preceding and by our choice of $T_0$, the resulting bound on $R$ becomes
$$
R< N^{2+\ve}V^{-2}\Big(1+\frac T{T_0}\Big)\eqno{(3.6)}
$$
implying (3.2).
\end{proof}

\begin{lemma}
Define for $\alpha>0$
$$
S(t)=\sum_{m, n\sim N} a_{m, n}(m^2+\alpha n^2)^{it} \ \text { with } \ |a_{m, n}|\leq 1.\eqno{(3.7)}
$$
Then, for $T>N^2$
$$
\text{mes\,} [|t|<T, |S(t)|>\lambda]\ll TN^{2+\ve} \lambda^{-2}.\eqno{(3.8)}
$$
\end{lemma}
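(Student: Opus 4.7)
The plan is to bound the second moment $\int_{|t|<T}|S(t)|^2\,dt$ and apply Chebyshev's inequality; concretely, (3.8) will follow once we establish
\[
\int_{|t|<T}|S(t)|^2\,dt\ll TN^{2+\ve}.
\]
Opening the square and integrating in $t$, the diagonal $(m,n)=(m',n')$ contributes exactly $2T\sum_{m,n\sim N}|a_{m,n}|^2\leq 2TN^2$. For an off-diagonal term, set $u=m^2-m'^2$ and $v=n^2-n'^2$ so that $(u,v)\neq(0,0)$; since $m^2+\alpha n^2$ and $m'^2+\alpha n'^2$ are both of size $\asymp N^2$, one has
$|\log((m^2+\alpha n^2)/(m'^2+\alpha n'^2))|\asymp|u+\alpha v|/N^2$,
whence the inner $t$-integral is bounded by $\min(2T,\,CN^2/|u+\alpha v|)$.

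Thus the off-diagonal contribution is $\ll\sum_{(u,v)\neq(0,0)}r(u,v)\min(T,CN^2/|u+\alpha v|)$, where $r(u,v)$ counts the quadruples $(m,n,m',n')$ with $m,n,m',n'\sim N$ giving rise to $(u,v)$. Factoring $u=(m-m')(m+m')$ and $v=(n-n')(n+n')$ and invoking the standard divisor bound, one has $r(u,v)\ll N^\ve$ when both $u$ and $v$ are nonzero, whereas $r(0,v)\ll N^{1+\ve}$ and $r(u,0)\ll N^{1+\ve}$. For the partially diagonal terms (one of $u,v$ equal to zero), $|u+\alpha v|$ is bounded below by a positive constant depending only on $\alpha$, and summing $CN^2/|u+\alpha v|$ over $|u|,|v|\leq N^2$ yields $\ll N^{3+\ve}$, which is already absorbed in $TN^{1+\ve}$ under the assumption $T>N^2$.

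For the main range $u,v\neq 0$ I split dyadically by the size of $|u+\alpha v|$. In the small range $|u+\alpha v|<1$ each $v$ admits at most two admissible values of $u$, so there are $\leq 2N^2$ such pairs; taking $\min=T$ gives a contribution $\ll TN^{2+\ve}$. For $|u+\alpha v|\in[2^j,2^{j+1})$ with $j\geq 0$ there are $\ll 2^j N^2$ pairs and each contributes $\ll N^2/2^j$, yielding $\ll N^{4+\ve}$ per dyadic scale and $\ll N^{4+\ve}\ll TN^{2+\ve}$ after summing $O(\log N)$ scales. Combining all contributions establishes the displayed second-moment bound, and Chebyshev's inequality then delivers (3.8). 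The only nontrivial bookkeeping is balancing the divisor bound $r(u,v)\ll N^\ve$ against the count of $(u,v)$ with small $|u+\alpha v|$; notably, no diophantine hypothesis on $\alpha$ is required, which is what allows the bound to hold uniformly for every fixed $\alpha>0$.
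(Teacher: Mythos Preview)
Your argument is correct and follows exactly the paper's approach: Chebyshev together with the mean-square bound
\[
\int_{|t|<T}|S(t)|^2\,dt \ll N^\ve (N^2+T)\sum|a_{m,n}|^2,
\]
which the paper simply quotes as (3.9) without proof. You have supplied the details of (3.9) by expanding the square, separating diagonal, partially diagonal, and fully off-diagonal terms, and controlling the latter via the divisor bound on $r(u,v)$ and a dyadic decomposition in $|u+\alpha v|$; this is the standard way to justify such a mean-value estimate when the frequencies $\log(m^2+\alpha n^2)$ are not well-spaced, and your treatment of the near-diagonal range $|u+\alpha v|<1$ by the trivial bound $T$ is exactly what makes the estimate uniform in $\alpha>0$.
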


\begin{proof}
This is immediate from the mean square bound
$$
\int_{|t|<T} |S(t)|^2 dt\ll N^\ve(N^2+T) \ \Big(\sum |a_{m, n}|^2\Big).\eqno{(3.9)}
$$
\end{proof}

We also need a bound on the partial sums of the Epstein zeta function.

\begin{lemma}
For $|t|>N^2$, we have
$$
\Big|\sum_{m, n\sim N} (m^2+\alpha n^2)^{it}\Big|\ll N|t|^{\frac 13+\ve}.\eqno{(3.10)}
$$
\end{lemma}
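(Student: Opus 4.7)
The plan is to estimate $S(t)=\sum_{m,n\sim N}e^{it\log(m^2+\alpha n^2)}$ as a two-dimensional exponential sum by van der Corput's method, treating the two variables asymmetrically. First, introduce smooth cutoffs $w(m/N)w(n/N)$ at the cost of an $N^\ve$ factor, and dyadically decompose in the ratio $m/n$ so as to stay away from the locus $m^2\approx\alpha n^2$ where the Hessian in $n$ alone of the phase degenerates; contributions from this thin strip fall below the target by the trivial bound, once $|t|>N^2$.

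The main step freezes $n$ and applies Poisson summation in $m$. The phase $f(m)=t\log(m^2+\alpha n^2)$ has $f'(m)\asymp t/N$ and $f''(m)\asymp t/N^2$, so the dual variable $\hat m$ runs over $|\hat m|\lesssim t/N$, and each stationary-phase contribution has amplitude $\sim N|t|^{-1/2}$ and phase $\Psi(\hat m,n)$ equal to the Legendre transform of $f$ in $m$ at the stationary point $m_0(\hat m,n)$. This gives schematically
$$
S(t)\sim \frac{N}{|t|^{1/2}}\sum_{|\hat m|\lesssim t/N}\,\sum_{n\sim N}e^{i\Psi(\hat m,n)}.
$$
For each $\hat m$, one would estimate the inner sum over $n$ by van der Corput's third derivative test: a direct calculation using the envelope theorem gives $|\partial_n^3\Psi|\asymp t/N^3$, hence $|\sum_n|\ll N^{1/2}|t|^{1/6}$. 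Putting absolute values on the $\hat m$-sum then yields only $N^{1/2}|t|^{2/3}$, which is off the target by $|t|^{1/6}/N^{1/2}$. The missing savings should come from cancellation in the $\hat m$ sum, extracted by either repeating Poisson summation with the roles of $m$ and $n$ reversed, or by applying a two-dimensional exponent-pair estimate to the joint $(\hat m,n)$ sum.

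The main obstacle is precisely this last step. Naive single-variable van der Corput produces $N^{3/2}|t|^{1/6}$, which at $t\sim N^2$ is a factor $N^{1/6}$ weaker than $N|t|^{1/3}$; recovering this factor requires simultaneous cancellation in both dual variables together with careful tracking of the Legendre phase $\Psi$. Secondary technicalities include handling the boundary of the $\hat m$-range (where $m_0$ approaches the edge of $\supp w$ and the second derivative test breaks down) and the degenerate locus $m^2\approx\alpha n^2$; both should be absorbed by dyadic decomposition and trivial estimates, falling below $N|t|^{1/3+\ve}$.
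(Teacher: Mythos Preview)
Your proposal candidly flags its own gap, and that gap is genuine: after Poisson in $m$ and a one--variable third--derivative test in $n$, you obtain $N^{1/2}|t|^{2/3}$, which misses the target $N|t|^{1/3}$ by the factor $|t|^{1/3}/N^{1/2}$ (not $|t|^{1/6}/N^{1/2}$ as you wrote), and this is $\geq N^{1/6}$ throughout the range $|t|>N^2$. Running the $B$--process in one variable before any Weyl step is going in the wrong direction; a further exponent--pair move on the transformed $(\hat m,n)$--sum cannot recover the loss, since that sum already has $\asymp |t|/N$ terms in the $\hat m$--direction and the Legendre phase $\Psi$ no longer has the homogeneity needed for an efficient second $AB$--step.

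The paper's route, carried out in detail in Blomer's Epstein--zeta paper, is a genuinely \emph{two--dimensional} third--derivative estimate (the 2D analogue of the exponent pair $(\tfrac16,\tfrac23)$). One applies Weyl differencing with a two--dimensional shift $h=(h_1,h_2)\in[-H,H]^2$,
\[
|S|^2\ \ll\ \frac{N^2}{H^2}\sum_{|h_1|,|h_2|<H}\Bigl|\sum_{m,n\sim N}e^{i\,g_h(m,n)}\Bigr|,\qquad g_h=f(\cdot+h)-f,\quad f=t\log(m^2+\alpha n^2),
\]
and then bounds the inner sum by the two--dimensional second--derivative (Poisson) estimate. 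The Hessian of $g_h$ has entries $\asymp |h|\,|t|/N^3$ and, away from a thin degenerate locus in $h$, determinant of that order squared; the clean fact underneath is $\det\nabla^2 f=-4\alpha t^2/(m^2+\alpha n^2)^2\asymp t^2/N^4$, which never vanishes. This yields $\bigl|\sum_{m,n}e^{ig_h}\bigr|\ll |h|\,|t|/N$, and choosing $H\asymp N|t|^{-1/3}$ (legitimate for $|t|\leq N^3$; beyond that the stated bound is trivial) gives $|S|^2\ll N^2|t|^{2/3}$. The moral is that the $A$-- and $B$--processes must be run in both variables \emph{simultaneously}; your proposed fix of ``a two--dimensional exponent--pair estimate'' is the right instinct, but it should be applied to the original sum, not to an already half--transformed one.
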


\begin{proof}

The argument follows the steps of Van der Corput's third derivative estimate similar to the case of partial sums of the Riemann zeta function (i.e. the exponent
pair $\big(\frac 16, \frac 23)\big)$.
Details of the argument may be found in \cite{Bl}, p 5, 6.
\end{proof}

\section
{Proof of the proposition $(ii)$}

Returning to the second term in (2.4), subdivide the integral
$$
\int_{[|t|>N^{\frac 1{10}}]} = \int _{[N^{\frac 1{10}}\leq |t|\leq N^2]} + \int_{[|t|>N^{2}]} = (4.1)+(4.2).
$$
Using the bound $N^{\frac 12+\ve}|t|^{\frac 16}\ll N^{\frac 56+\ve}$ on $F_2(t)$ for $N^{\frac 1{10}} <|t|<N^2, (4.1)\ll N^{\frac {17}3+\ve}$.
Next, we evaluate (4.2).

Let $I=[N^{2}, T]$ or of the form $[T_0, T_0+T]$, $T_0\geq T$.
In view of the factor $\min \big(1, (\frac T{|t|})^{10}\big)$ it clearly suffices to consider a single interval $I$.
Introduce level sets
$$
\Omega_\lambda = [|t|\in I;  |F_1(t)|\sim\lambda]
$$
and
$$
\Omega_V' =[t\in I;  |F_2(t)|\sim V].
$$
By Lemma 2, $|\Omega_\lambda|\ll TN^{2+\ve}\lambda^{-2}$ where, by Lemma 3, we may restrict $\lambda\leq \lambda_*=NT_0^{\frac 13+}$.
Application of Lemma 1 to the Dirichlet polynomial $S(t)= F_2(t)^2=\sum_{n\sim N^2} a_n n^{it}$, $0\leq |a_n|\ll N^\ve$, obtained by shift in $t$ and 
replacing $V$ by $V^2$, implies
that $|\Omega_V|\ll N^\ve (N^4V^{-4}+N^8 V^{-12}T)$.

Hence 
$$
(4.2)<N^\ve \max_{\lambda<\lambda_*, V} (\lambda^2V^2)|\Omega_\lambda\cap\Omega_V'|)\eqno{(4.3)}
$$
where from the preceding
$$
\begin{aligned}
\lambda^2V^2 |\Omega_\lambda\cap \Omega_V'|&\ll N^\ve\min (TN^2V^2, N^4V^{-2}\lambda^2+TN^8 V^{-10}\lambda^2)\\
&\ll T^{\frac 12} N^{3+\ve}\lambda_* +TN^{3+\ve}\lambda_*^{\frac 13}\ll T_0^{\frac 56} N^{4+\ve}+ T_0^{\frac {10} 9} N^{\frac {10}3+\ve}.
\end{aligned}
$$
It follows that the l.h.s. of (2.4) may be estimated by
$$
T^{\frac 56}N^{-2+\ve}+T^{\frac {10}9} N^{-\frac 83+\ve}\ll N^{-\frac 13+\ve} \delta^{-\frac 56}+N^{-\frac 49+\ve}\delta^{-\frac {10}9}< N^{-\frac
13+\ve}\delta^{-\frac 56}
$$
\big(again in view of the factor $(\frac T{T_0})^{20}$ for $T_0\geq T$\big)
provided $\delta>N^{-\frac 25}$.

\section
{ Further comment: Generic diagonal forms}

Instead of fixing $\alpha_2$, we may consider both $\alpha_2, \alpha_3 \in [\frac 12, 1]$ as parameters,
hence the fully generic (2-parameter family) of indefinite diagonal ternary quadratic forms.
In this situation, (1.3) in the Theorem holds without the need to invoke the Lindel\"of hypothesis.

Recalling the definition of $F_1$ and $F_2$, if we have $\alpha_2$ as additional parameter at our disposal, the second term in (2.4) may be replaced by (with $\xi =0$)
$$
N^{-6} \int_{[|t|>N^{\frac 1{10}}]} \min \Big(1, \Big(\frac Tt\Big)^{20}\Big) |F_2(t)|^2 \big[Av_{\alpha_2} |F_1(t)|^2\big] dt.\eqno{(5.1)}
$$

\begin{lemma}
$$
Av_{\alpha_2} |F_1(t)|^2 \ll N^{2+\ve}+ \frac {N^{4+\ve}}{|t|}.\eqno{(5.2)}
$$
\end{lemma}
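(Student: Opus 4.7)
The plan is to open the square and exploit oscillation in $\alpha_2$ for each fixed quadruple. With $\xi=0$,
$$
|F_1(t)|^2 = \sum_{x_1,x_2,y_1,y_2} w_1\Bigl(\frac{x_1}{N}\Bigr)w_1\Bigl(\frac{x_2}{N}\Bigr)w_1\Bigl(\frac{y_1}{N}\Bigr)w_1\Bigl(\frac{y_2}{N}\Bigr)\, e^{it\phi(\alpha_2)},
$$
where $\phi(\alpha_2):=\log(x_1^2+\alpha_2 x_2^2)-\log(y_1^2+\alpha_2 y_2^2)$, and we average against a smooth weight $\omega$ localized in $[\frac 12,1]$. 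A direct computation gives
$$
\phi'(\alpha_2)=\frac{A}{(x_1^2+\alpha_2 x_2^2)(y_1^2+\alpha_2 y_2^2)},\quad A:=x_2^2y_1^2-x_1^2y_2^2=(x_2y_1-x_1y_2)(x_2y_1+x_1y_2),
$$
and hence, uniformly for $\alpha_2\in[\frac12,1]$ and variables of size $N$, both $|\phi'(\alpha_2)|\asymp |A|/N^4$ and $|\phi''(\alpha_2)|\lesssim |A|/N^4$.

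I would then split according to whether $A=0$, i.e.\ the diagonal $x_1y_2=x_2y_1$. The standard divisor bound $\sum_{M\sim N^2}r(M)^2\ll N^{2+\ve}$ (with $r(M)$ counting factorizations $M=ab$, $a,b\sim N$) shows there are at most $N^{2+\ve}$ diagonal quadruples, and on the diagonal the $\alpha_2$-integral is trivially $O(1)$; this yields the first term $N^{2+\ve}$ in (5.2). For off-diagonal quadruples, set $B:=x_2y_1-x_1y_2\ne 0$; since $x_2y_1+x_1y_2\asymp N^2$, we have $|A|\asymp N^2|B|$. A single integration by parts in $\alpha_2$ then gives
$$
\Bigl|\int e^{it\phi(\alpha_2)}\omega(\alpha_2)\,d\alpha_2\Bigr|\ll\frac{1}{|t||\phi'|}+\frac{|\phi''|}{|t||\phi'|^2}\ll\frac{N^4}{|t||A|}\asymp\frac{N^2}{|t||B|}.
$$

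For each fixed $B$ with $1\le|B|\le CN^2$ I would bound the number of quadruples satisfying $x_2y_1-x_1y_2=B$ by freezing $(x_2,y_1)$ and solving $x_1y_2=x_2y_1-B$, which has $O(N^\ve)$ solutions by the divisor bound; summing over $(x_2,y_1)\sim N$ gives $\ll N^{2+\ve}$ quadruples per $B$. Consequently,
$$
\sum_{0<|B|\le CN^2} N^{2+\ve}\cdot\frac{N^2}{|t||B|}\ll \frac{N^{4+\ve}}{|t|},
$$
which combined with the diagonal contribution proves (5.2). The only delicate point is that a \emph{single} integration by parts must be uniform in $A$: this works precisely because $|\phi''|$ scales like $|A|/N^4$, matching the scaling of $|\phi'|$, so that the correction $|\phi''|/|\phi'|^2$ stays bounded by $N^4/|A|$ and does not dominate the main term even when $|A|$ is small.
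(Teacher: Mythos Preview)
Your argument is correct and essentially identical to the paper's own proof. The paper opens the square in the same way, computes $\partial_{\alpha}\phi\sim (x_2^2x_3^2-x_1^2x_4^2)/N^4$, applies the first-derivative bound $\min\bigl(1,\tfrac{N^4}{|t|\,|x_2^2x_3^2-x_1^2x_4^2|}\bigr)$, factors $|x_2^2x_3^2-x_1^2x_4^2|\sim N^2|x_2x_3-x_1x_4|$, and splits into the cases $x_2x_3-x_1x_4=0$ and $\ne 0$; your explicit integration by parts and divisor-bound counts simply fill in the details the paper leaves as ``(5.2) easily follows.''
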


\begin{proof}
Write
$$
|F_1(t)|^2 =\sum_{x_1, x_2, x_3, x_4\sim N} e^{it [\log (x_1^2+\alpha_2x_2^2)-\log (x_3^2 \alpha_2 x_4^2)]}
$$
and note that the phase function satisfies
$$
\partial_\alpha [\log (x_1^2+\alpha x_2^2) -\log (x_3^2 +\alpha x_4^2)] \sim \frac {x_2^2x_3^2 -x_1^2x_4^2}{N^4}.
$$
Hence we may bound
$$
{Av}_{\alpha_2} |F_1(t)|^2 \leq C\sum_{x_1, x_2, x_3, x_4 \sim N}\min \Big(1, \frac {N^4}{|t|\big|x_2^2 x_3^2-x_1^2 x_4^2|}\Big).\eqno{(5.3)}
$$
Writing $|x_2^2x_3^2 -x_1^2 x^2_4|\sim N^2 |x_2x_3 -x_1 x_4|$ and distinguishing the cases $x_2x_3 -x_1x_4=0$ and
$|x_2x_3 -x_1x_4|\geq 1$, (5.2) easily follows.
\end{proof}

Since $\int_{|t|\sim 2^k}|\zeta (\frac 12+it)|^2\ll 2^{k(1+\ve)}$, we obtain from (2.6) that
$\int_{|t|\sim 2^k}|F_2(t)|^2 \ll N2^{k(1+\ve)}$.
Together with (5.2), this implies that again
$$
\begin{aligned}
(5.1)&\ll N^{-6+\ve} \sum_k \min (1, T.2^{-k})^2 N2^{k(1+\ve)} (N^2+2^{-k}N^4)\\
&\ll N^{-6+\ve} (N^3T+N^5)\ll N^{-3+\ve} T=\frac 1{\delta N^{1-\ve}}.
\end{aligned}
$$
This proves the claim.

\end{document}